\pgfplotsset{compat=1.16}
\numberwithin{equation}{section}
\providecommand{\examplename}{Example}
\newtheorem{theorem}{Theorem}[section]
\newtheorem{corollary}[theorem]{Corollary}
\newtheorem{lemma}[theorem]{Lemma}
\theoremstyle{remark}
\newenvironment{remark}
    {\pushQED{\qed}\remarkx}
    {\popQED\endremarkx}
\theoremstyle{definition}
\newtheorem*{example*}{\protect\examplename}
\theoremstyle{plain}
\newtheorem*{assumption*}{Assumption}
\newcommand\N{\mathbb{N}}
\newcommand\Z{\mathbb{Z}}
\newcommand\R{\mathbb{R}}
\newcommand\E{\mathds{E}}
\newcommand\p{\mathds{P}}
\newcommand\1{\mathds{1}}
\newcommand\ld{,\ldots,}
\newcommand\eqd{\overset{\mathscr{D}}{=}}
\newcommand{\D}{\mathrm{d}}
\newcommand\cid{\xrightarrow{\mathscr{D}}}
\newcommand{\wt}[1]{\widetilde{#1}}
\newcommand\var{\mathrm{Var}}
\newcommand\cov{\mathrm{Cov}}
\newcommand{\CD}{\mathscr{C}}
\newcommand\mH{\mathfrak{H}}
\newcommand{\mD}{\mathbb{D}^{1,2}}
\appto{\bibsetup}{\sloppy}
\title[Exponential dimensional dependence in generalised method of moments]{Exponential dimensional dependence for Gaussian approximation in generalised method of moments}
\author{Andreas Basse-O'Connor$^*$ \& David Kramer-Bang$^{\dag}$}
\address{$^{*\dag}$Department of Mathematics, Aarhus University, DK}
\email{$^*$ basse@math.au.dk}
\email{$^\dag$ bang@math.au.dk}
\begin{document}

\begin{abstract}
It is numerically well known that moment-based tests for Gaussianity and estimators become increasingly unreliable at higher moment orders; however, this phenomenon has lacked rigorous mathematical justification. In this work, we establish quantitative bounds for Hermite-based moment tests, with matching exponential upper and lower bounds. Our results show that, even under ideal conditions with i.i.d.\ standard normal data, the sample size must grow exponentially with the highest moment order $d$ used in the test. These bounds, derived under both the convex distance and the Kolmogorov--Smirnov distance, are applied to classical procedures, such as the Shenton--Bowman test.
\end{abstract}

\subjclass[2020]{60H07; 60F05; 60G15; 33C45}

\keywords{Hermite polynomials, Method of Moments, Malliavin Calculus, Rate of Convergence}

\maketitle

\section{Introduction}\label{sec:intro}
The Method of Moments, introduced by Karl Pearson in 1894, estimates parameters $\bm{\theta} \in \mathbb{R}^k$ by equating theoretical and empirical moments of a random variable $W$ depending on $\bm{\theta}$. Solving the resulting system of equations yields tests and estimators that are often computationally tractable, particularly when likelihood methods are unavailable. Since the first $n$ moments, cumulants, and Hermite polynomials are in one-to-one correspondence under Gaussianity, they are frequently used interchangeably in classical tests, including the Shenton--Bowman test~\cite{MR381079} and related methods~\cite{AMENGUAL2024,MR2067685,DECLERCQ1998101,MR4814822}.

Despite their widespread use, it is frequently noted in the literature~\cite{Max_Welling,Lo2015HighMoments,DeClerk2022HigherOrder} that moment-based methods perform poorly at high moment orders. In particular, reliable estimation or testing involving moments of order $d$ requires significantly larger sample sizes as $d$ increases. While this phenomenon is well known numerically, it has largely lacked rigorous justification. In this work, we formalise this by showing an exponential dependence on $d$, by proving nearly matching upper and lower bounds (both exponential) on the sample size required to detect deviations from normality using Hermite-based moment tests. Specifically, Theorem~\ref{thm:iid_MOM} establishes that the required sample size grows as $d^{3/4}e^{d \cdot 3\log(2)/2}$ for the upper bound in the convex distance~\eqref{eq:hyper_rect_defn} and as $d^{-3/4}e^{d \cdot 3\log(2)/2}$ for the lower bound in the Kolmogorov--Smirnov distance.

Concerns about the reliability of higher-order moment tests have been raised in several studies. In~\cite{Lo2015HighMoments}, the authors extend the classical Jarque--Bera test to include up to the first $4k$ moments and show through simulations that higher-order variants exhibit increased variability and require larger samples to perform comparably to classical tests. Higher-order moments in financial time series are analysed in~\cite{DeClerk2022HigherOrder}, and it is found that their estimates become highly unstable in small samples. Similarly, in~\cite{Max_Welling}, it is noted that high-order cumulants are extremely sensitive to sampling variability and outliers, making them difficult to estimate reliably without large datasets.

We show in this paper that the exponential scaling arises even in the most idealised setting of i.i.d.\ standard normal data, revealing a fundamental statistical limitation. To support the generality of this phenomenon, similar exponential bounds are also shown for dependent variables~\cite{Basse-Bang_Quant}, indicating that high-order moment conditions are inherently costly across a wide class of models.

The analysis of this paper reveals that sample complexity is driven not by the number of moment conditions used, but by the highest moment order involved. We demonstrate that even sparse use of high-order moments, like a test based solely on the sixth moment, is statistically as expensive as a test involving all moments up to that order. This is done via a lower bound (see Lemma~\ref{lem:lower_bound_Hermite_var}), which is proven by a reduction to a one-dimensional problem, which isolates the exponential cost to the index of the highest moment included. Thus, sharper detection sensitivity through high-order moments inevitably comes with exponential sample size requirements. Getting such lower bounds uses the bounds from~\cite{MR2573557}, and relies on Malliavin calculus, introduced in Section~\ref{sec:preliminaries}.

These findings clarify the role of quantitative bounds in moment-based inference, particularly in the Generalised Method of Moments (GMM), also discussed in~\cite[Sec.~2.1]{Basse-Bang_Quant}. Let $G_1,\ldots,G_n$ be a stationary time series governed by a model $\{\p_\xi: \xi \in \Xi\}$ with true parameter $\xi_0$. When likelihood-based methods are infeasible, GMM uses functions $\Phi:\mathbb{R} \to \mathbb{R}^d$ such that $\mathbb{E}_{\xi_0}[\Phi(G_1)] = 0$ if and only if $\xi = \xi_0$. To test the null hypothesis $\mathrm{H}_0\colon \xi = \xi_0$ against the alternative $\mathrm{H}_1\colon \xi \neq \xi_0$, we consider the test statistic $\bm{S}_n \coloneqq n^{-1/2} \sum_{k=1}^n \Phi(G_k)$. Under suitable conditions, $\bm{S}_n$ is approximately distributed as $\bm{Z} \sim \mathcal{N}_d(0, \bm{\Sigma})$, where $\bm{\Sigma}$ is the covariance matrix of $\Phi(G_1)$. For a given significance level $\alpha_0>0$, one can construct a Borel-measurable acceptance region  $A_\alpha\subset\mathbb{R}^d$, where $\mathbb{P}(\mathbf{Z}\in A_\alpha)=\alpha_0$, and where we accept the null-hypothesis $H_0$, if $\mathbf{S}_n\in A_\alpha$, and reject otherwise.

Although asymptotic normality is often justified via the Central Limit Theorem (CLT), statistical inference ultimately depends on finite-sample approximations. In particular, the model is only accurate to the extent that the Gaussian approximation accurately reflects the actual distribution of the test statistic. It is therefore crucial to understand how the approximation error depends on the sample size $n$ and the dimension $d$ of the test statistic. The \emph{finite-sample error},
\begin{equation}\label{eq:finite_sample_error}
    \text{Error}_{\text{f.s.}}(A_\alpha)\coloneqq \big|\mathbb{P}_{\xi_0}(\mathbf{S}_n\in A_\alpha)-\alpha_0\big|,
\end{equation}
quantifies the deviation of the distribution of~$\bm{S}_n$ from its Gaussian limit $\bm{Z}$ in the relevant geometric acceptance regions. This error directly impacts the reliability of hypothesis testing and the construction of valid confidence intervals. The two most used types of acceptance regions are; hyper-rectangles $A_\alpha=\{\mathbf{x}\in\mathbb{R}^d:\ \mathbf{v}_l\le \mathbf{x}\le \mathbf{v}_u\}$ for thresholds $\mathbf{v}_l,\mathbf{v}_u\in\mathbb{R}^d$, and classic geometric examples including ellipsoids
$A_\alpha=\{\mathbf{x}:\ \mathbf{x}^\top \mathbf{W}\,\mathbf{x}\le r_\alpha\}$ ($\bm{W}$ symmetric positive definite matrix) and Euclidean balls $A_\alpha=\{\mathbf{x}:\ \|\mathbf{x}\|\le r_\alpha\}$, all satisfying $\mathbb{P}(\mathbf{Z}\in A_\alpha)=\alpha_0$. 

To control the finite-sample error in~\eqref{eq:finite_sample_error}, we use the convex distance~$d_\CD$, which provides a uniform bound $\text{Error\textsubscript{f.s.}} \leq d_\CD(\bm{S}_n, \bm{Z})$. Our main result shows that this bound grows exponentially with~$d$. To demonstrate the sharpness of our upper bound, we also derive almost matching exponential lower bounds on~$d_\CD(\bm{S}_n, \bm{Z})$ via the Kolmogorov--Smirnov distance~$d_\mathcal{K}(\bm{S}_n, \bm{Z})$, defined in~\eqref{eq:hyper_rect_defn}, in the i.i.d.\ Gaussian setting. This framework applies directly to classical tests such as the Shenton--Bowman~\cite{MR381079}, Baringhaus-Henze-Epps-Pulley~\cite{MR980849,MR725389}, and Kolmogorov--Smirnov~\cite{Kol-33,MR1483} tests, as further discussed in~\cite{Basse-Bang_Quant}.

\subsection{Method of Moments vs.\ Hermite Method of Moments}

As discussed in the introduction, a central application of the Method of Moments is testing for Gaussianity. If a random variable $X$ has finite moments, then
\begin{equation}\label{eq:prob_met_of_mom}
X \sim \mathcal{N}(0,1) \qquad \text{if and only if} \qquad \mathbb{E}[X^q] =
\begin{dcases}
0, & \text{if } q \text{ is odd},\\
(q-1)!!, & \text{if } q \text{ is even},
\end{dcases}
\quad \text{for all } q \in \mathbb{N},
\end{equation}
where $q!!$ denotes the double factorial. Hence, to test whether a sequence of observations $G_1, \ldots, G_n$ is standard Gaussian, one can construct a moment-based test using the first $d$ equations from~\eqref{eq:prob_met_of_mom}. This leads to the test statistic
\begin{equation}\label{eq:MOM_defn_S_n_standard} 
\bm{S}_n = \frac{1}{\sqrt{n}} \sum_{k=1}^n \left(G_k, \frac{G_k^2 - 1}{\sqrt{2}}, \frac{G_k^3}{\sqrt{15}}, \ldots, \frac{G_k^d - \mathbb{E}[G_k^d]}{\sqrt{(2d - 1)!! - \mathbb{E}[G_k^d]^2}}\right)^\top, \quad \text{for all } n, d \in \mathbb{N}.
\end{equation}
Under the null hypothesis that $G_1, \ldots, G_n$ are i.i.d.\ standard normal, the vector $\bm{S}_n \in \mathbb{R}^d$ converges in distribution to a multivariate Gaussian $\bm{Z} \sim \mathcal{N}_d(\bm{0}, \bm{\Sigma})$ for some covariance matrix $\bm{\Sigma}$. A natural test statistic is $R_n = \|\bm{S}_n\|^2 \cid \|\bm{Z}\|^2$ as $n \to \infty$, where $\|\bm{x}\|^2 = \sum_{i=1}^d x_i^2$ for $\bm{x} \in \mathbb{R}^d$.

A major drawback of~\eqref{eq:MOM_defn_S_n_standard} is that the entries of $\bm{S}_n$ are not generally orthogonal, so the covariance matrix $\bm{\Sigma}$ is typically not the identity. This complicates analysis and may even cause the test to break down in practice, especially when $\bm{\Sigma}$ becomes singular. In some classical tests, such as the Shenton--Bowman test~\cite{MR381079,MR2067685}, only a subset of standardised moments (typically the third and fourth) is used, yielding orthogonal components.

In practical applications, it is important to design test statistics that avoid covariance singularities, since they degrade statistical performance. This motivates the use of orthogonal expansions, where the test components are constructed to be uncorrelated by design. A natural choice in the Gaussian setting is to use \emph{Hermite polynomials}, which form an orthogonal basis with respect to the standard Gaussian measure. In particular, for any random variable $X$ with finite moments,
\begin{equation}\label{eq:equivalence_normal_dist}
X \sim \mathcal{N}(0,1) \qquad \text{if and only if} \qquad \mathbb{E}[H_q(X)] = 0 \quad \text{for all } q \geq 1,
\end{equation}
cf.\ \cite[Thm~2.2.1]{MR2962301} and~\cite[Prop.~2]{MR1691731}. Here, $H_q$ denotes the $q$th Hermite polynomial, defined by
\[
H_q(x) \coloneqq (-1)^q e^{x^2/2} \frac{\D^q}{\D x^q} e^{-x^2/2}, \quad \text{for all } x \in \mathbb{R}, \; q \in \mathbb{N}.
\]
In particular, $H_0(x)\equiv 1$, $H_1(x) = x$ and $H_2(x) = x^2 - 1$. Since the family of Hermite polynomials $\{H_q : q \ge 0\}$ is an orthogonal basis of $L^2(\mathcal{N}(0,1))$~\cite[Prop.~1.4.2]{MR2962301}, condition~\eqref{eq:equivalence_normal_dist} offers a powerful tool for constructing normality tests based on orthogonal features. A test using the first $d$ equations of~\eqref{eq:equivalence_normal_dist} is referred to as a \emph{Hermite method of moments test}, or sometimes just \emph{Hermite test} (HT).

Under the null hypothesis that $G_1, \ldots, G_n$ are i.i.d.\ standard Gaussian, a Hermite-based test statistic can be defined using the vector $\bm{S}_n \in \mathbb{R}^d$, given by
\begin{equation}\label{eq:MOM_defn_S_n} 
\bm{S}_n \coloneqq \frac{1}{\sqrt{n}} \sum_{k=1}^n \left(H_1(G_k), \frac{H_2(G_k)}{\sqrt{2!}}, \ldots, \frac{H_d(G_k)}{\sqrt{d!}} \right)^\top,
\end{equation}
where each component is centered and normalised to have zero mean and unit variance. The norm $R^{HT}_n\coloneqq \|\bm{S}_n\|^2$ then serves as a test statistic for standard normality, and under the null-hypothesis, it converges in distribution to a $\chi^2(d)$ variable as $n \to \infty$ (see further discussion in Section~\ref{sec:satat_tests}).

The rest of the paper is structured as follows. The main quantitative bounds are stated in Section~\ref{sec:main_res}, where exact upper bounds are constructed for specific statistical tests, such as the Hermite polynomial statistical test, the Shenton--Bowman test, and the use of higher moments in statistical tests for normality are considered in Subsection~\ref{sec:satat_tests}. In Section~\ref{sec:preliminaries}, we introduce the most relevant results from Malliavin calculus needed for the proofs of the paper. All proofs are finally stated in Section~\ref{sec:proofs}.

\section{Main Result}\label{sec:main_res}
The main aim of this paper is to find a quantitative rate, completely explicit in $n$ and $d$, for $\bm{S}_n \cid \bm{Z}$ as $n \to \infty$, where $\bm{S}_n$ is the Hermite statistical test in~\eqref{eq:MOM_defn_S_n}. To find the rate of convergence, we need to decide on a metric to measure this convergence. %As is evident from the proof of Theorem~\ref{thm:iid_MOM}, 
We will show both upper and lower bounds for the metrics: Kolmogorov--Smirnov distance $d_{\mathcal{K}}$, %hyper-rectangle distance $d_\HR$ 
and convex distance $d_\CD$ on $\R^d$. These metrics can be considered as a generalisation of the Kolmogorov metric on $\R$. The metrics are defined as follows:
\begin{align}
\begin{aligned}\label{eq:hyper_rect_defn}
    d_{\mathcal{K}}(\bm{X},\bm{Y})&\coloneqq \sup_{A \in \mathcal{K}}|\p(\bm{X} \in A)-\p(\bm{Y} \in A)|, \, \text{ where }&&\mathcal{K}=\bigg\{\bigtimes_{i=1}^d (-\infty,,b_i): -\infty \le b_i \le \infty\bigg\},\\
    d_\CD(\bm{X},\bm{Y})&\coloneqq \sup_{A \in \CD}|\p(\bm{X} \in A)-\p(\bm{Y} \in A)|, \, \text{ where } &&\CD=\{A \subset \R^d: A\text{ is convex}\},
\end{aligned}
\end{align} for all random vectors $\bm{X}$ and $\bm{Y}$ in $\R^d$, where $\mathcal{K}$ is the set of all left quadrants of $\R^d$, and $\CD$ is the set of all convex subsets of $\R^d$. From the definitions in~\eqref{eq:hyper_rect_defn}, it follows directly that the metrics are ordered: $d_{\mathcal{K}} \le %d_\HR \le 
d_\CD$.

\begin{theorem}\label{thm:iid_MOM} 
Let $\bm{S}_n$ be given in~\eqref{eq:MOM_defn_S_n} where $(G_k)_{k \in \N}$ is an i.i.d.\ Gaussian sequence, and assume that $\bm{Z}\sim \mathcal{N}_d(\bm{0},\bm{I}_d)$. Let $M \in \{\mathcal{K},\CD\}$. Then the following statements hold. 

\noindent\underline{Upper bound:} For $C \coloneqq 58 e^{3\log(2)/2} /(e^{3\log(2)/2}-1)$, it holds for all $n,d \in \N$, that 
     \begin{equation}\label{eq:up_low_bound_iid_MOM-1}
  d_M(\bm{S}_n,\bm{Z})  \le  C  d^{3/4} e^{d\cdot 3\log(2)/2}\, n^{-1/2}.
    \end{equation}
    
\noindent \underline{Lower bound:} Let $c \coloneqq (e\pi^32^3)^{-1/4}e^{-3\log(2)/2}$. For all $d\ge 2$ there exists a sequence $(N_d)_{d\ge 2}$ in $\N$, such that for all $n \ge N_d$,
\begin{equation}\label{eq:up_low_bound_iid_MOM}
      %e^{\varsigma d} \frac{ C_{\varsigma} }{\sqrt{n}} 
     d_{M}(\bm{S}_n,\bm{Z}) \ge cd^{-3/4}e^{d \cdot 3\log(2)/2}\, n^{-1/2}.
    \end{equation}
\end{theorem}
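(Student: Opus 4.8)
The plan is to realise $\bm S_n$ as a vector of multiple Wiener--Itô integrals and run the Malliavin--Stein machinery of Section~\ref{sec:preliminaries} on it. Writing $G_k=W(e_k)$ for an orthonormal system $(e_k)$ in the underlying Hilbert space $\mH$, one has $H_q(G_k)=I_q(e_k^{\otimes q})$, so the $q$-th coordinate of $\bm S_n$ is $F_q^{(n)}=I_q(f_q^{(n)})$ with $f_q^{(n)}=(n\,q!)^{-1/2}\sum_{k=1}^n e_k^{\otimes q}$. Orthogonality of the Hermite polynomials gives $\E[F_p^{(n)}F_q^{(n)}]=\delta_{pq}$, so the covariance of $\bm S_n$ is exactly $\bm I_d$ and no rescaling is needed. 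Since $d_{\mathcal K}\le d_\CD$, it suffices to prove the upper bound for $M=\CD$ and the lower bound for $M=\mathcal K$. Everything then reduces to estimating contraction norms and low-order cumulants of the $F_q^{(n)}$; the key elementary computation, using $\langle e_k,e_l\rangle=\delta_{kl}$, is
\[\big\|f_p^{(n)}\otimes_r f_q^{(n)}\big\|_{\mH^{\otimes(p+q-2r)}}^2=\frac{1}{n\,p!\,q!},\qquad 1\le r\le\min(p,q),\]
together with the explicit third moments $\E[H_pH_qH_r]$.

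For the \textbf{upper bound} I would feed these into a multivariate Gaussian-approximation bound for the convex distance, of Berry--Esseen/Edgeworth type, whose leading term in $n$ is carried by the third cumulants rather than the fourth. For i.i.d.\ sums the honest rate is $n^{-1/2}$, and the sharp constant is governed by the signed joint third cumulants $\kappa_3(X_p,X_q,X_r)=\E[H_pH_qH_r]/\sqrt{p!\,q!\,r!}$ of a single summand, not by the third absolute moment. The dominant entry is the diagonal one, $\E[H_d^3]=(d!)^3/((d/2)!)^3$ for even $d$, and a Stirling expansion turns this into $\kappa_3(X_d,X_d,X_d)\asymp d^{-3/4}e^{d\cdot 3\log(2)/2}$: the factorial factors $d^{3d/2}e^{-3d/2}$ cancel and leave precisely the base $2^{3/2}=e^{3\log(2)/2}$. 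Summing the per-order contributions over $q=1,\dots,d$ produces a geometric series in $e^{3\log(2)/2}$, responsible for the constant $C=58\,e^{3\log(2)/2}/(e^{3\log(2)/2}-1)$, while the dimensional factors in the convex-distance Stein bound upgrade the $d^{-3/4}$ of the single cumulant to the stated $d^{3/4}$. For the (few) small $n$ where the right-hand side of~\eqref{eq:up_low_bound_iid_MOM-1} exceeds $1$, the bound holds trivially since $d_M\le 1$.

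For the \textbf{lower bound} I would use the reduction to one dimension behind Lemma~\ref{lem:lower_bound_Hermite_var}: testing against the quadrant $\{x_j\le b\}$, with all other coordinates unconstrained, gives $d_{\mathcal K}(\bm S_n,\bm Z)\ge\sup_b|\p(F_j^{(n)}\le b)-\Phi(b)|$, the one-dimensional Kolmogorov distance of a single coordinate to $\mathcal N(0,1)$. Choosing $j$ to be the largest even index $\le d$ makes the third cumulant non-zero and maximal. For fixed $d$ one has $\kappa_3(F_j^{(n)})\asymp n^{-1/2}$ and $\kappa_4(F_j^{(n)})\asymp n^{-1}$, so the exact Berry--Esseen asymptotics of~\cite{MR2573557} yield a matching one-dimensional lower bound $d_{\mathcal K}(F_j^{(n)},N)\ge c'|\kappa_3(F_j^{(n)})|$ once $n$ is large enough that the $\kappa_3$-term dominates the $\kappa_4$-correction; this is exactly the role of the threshold $N_d$. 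The same Stirling estimate gives $|\kappa_3(F_j^{(n)})|\asymp d^{-3/4}e^{d\cdot 3\log(2)/2}n^{-1/2}$, where for odd $d$ the replacement of $d$ by $d-1$ only changes constants and is absorbed into $c$.

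The main obstacle is the upper bound, specifically obtaining a convex-distance estimate sharp enough to exhibit the base $2^{3/2}$. The naive fourth-moment route gives only $d_\CD\le C\sqrt{\kappa_4}$, and here $\kappa_4(F_q^{(n)})\asymp 9^q/n$, so $\sqrt{\kappa_4}\asymp 3^q n^{-1/2}$ --- with base $3>2^{3/2}$ this is strictly too weak to match the lower bound. One therefore has to extract the third-cumulant (Edgeworth) term as the genuine leading order and show that the fourth- and higher-order contributions are lower order in $n$ (entering at $n^{-1}$) in the range where the claimed bound is non-trivial. Carrying this out uniformly in both $n$ and $d$, while keeping the Stirling asymptotics precise enough to recover the exact exponential base and the polynomial power $d^{3/4}$, is the delicate part; the lower bound, by contrast, is essentially a one-dimensional consequence of the cumulant computation and~\cite{MR2573557}.
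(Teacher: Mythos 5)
Your lower bound is essentially the paper's argument: reduce to the one--dimensional Kolmogorov distance of the highest even--order coordinate, apply the exact Berry--Esseen asymptotics of~\cite{MR2573557} (whose leading term is the third--cumulant functional $\langle f,f\wt\otimes_{d/2}f\rangle$), and extract the rate $d^{-3/4}e^{3d\log(2)/2}n^{-1/2}$ by Stirling; the threshold $N_d$ plays exactly the role you describe. That part is sound.

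The upper bound, however, has a genuine gap. You assert that a convex--distance bound at rate $n^{-1/2}$ whose constant is ``governed by the signed joint third cumulants \dots not by the third absolute moment'' is available, and you correctly flag that making such an Edgeworth--type statement rigorous, uniformly over convex sets and uniformly in $d$, is the hard part --- but you never resolve it, and no off--the--shelf result of that form exists. The standard explicit multivariate Berry--Esseen bounds (Bentkus~\cite{MR2144310}, and the version with explicit constants used in the paper) control $d_\CD(\bm S_n,\bm Z)$ by $n^{-1/2}(42d^{1/4}+16)\,\E\big[(\sum_{i=1}^d\varphi_i(G_1)^2)^{3/2}\big]$, i.e.\ by the third \emph{absolute} moment of the summand. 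The key observation you are missing is that this cruder route already yields the correct exponential base: by hypercontractivity on the $q$--th Wiener chaos with $p=3$ \cite[Cor.~2.8.14]{MR2962301}, one has $\E[|H_q(G_1)|^3]\le (3-1)^{3q/2}\E[H_q(G_1)^2]^{3/2}=2^{3q/2}(q!)^{3/2}$, so $\E[|\varphi_q(G_1)|^3]\le e^{q\cdot 3\log(2)/2}$ --- the same base $2^{3/2}$ as the signed cumulant, with no Edgeworth analysis needed. Jensen's inequality ($(\sum_i a_i)^{3/2}\le d^{1/2}\sum_i a_i^{3/2}$) and the geometric series $\sum_{q=1}^d e^{3q\log(2)/2}$ then produce exactly the stated $C\,d^{3/4}e^{3d\log(2)/2}n^{-1/2}$. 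Your dichotomy between the ``naive fourth--moment route'' (base $3$, too weak) and a full signed--cumulant expansion is a false one; the third--absolute--moment route sits in between and suffices. As written, your upper--bound argument does not close.
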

Theorem~\ref{thm:iid_MOM} yields an exponential dependence on dimension for the Hermite method of moments in both the upper and lower bound~\eqref{eq:up_low_bound_iid_MOM}. But why does the Hermite method of moments depend so heavily on $d$? Is it the \emph{curse of dimensionality} or the large moments which cause this breakdown? The breakdown is not caused by the high dimensionality of $\bm{S}_n$, since the proof of the lower bound~\eqref{eq:up_low_bound_iid_MOM} only relies on the Kolmogorov distance between the last coordinate of $\bm{S}_n$ and $\bm{Z}$, making it a one-dimensional problem. Thus, the reason for an exponential $d$-dependence is the large moments.

\begin{remark}\label{rem:size_of_N_d}
\noindent (I) An important question in Theorem~\ref{thm:iid_MOM}, is how big $N_d$ should at least be? Since we have the trivial bound $d_{\mathcal{K}}(\bm{S}_n,\bm{Z}) \le 1$, we must have that $N_d \ge e^{3(d-1) \log(2)-3\log(d)/2}(e\pi^32^3)^{-1/2}$ for $d \ge 2$, for the inequality in~\eqref{eq:up_low_bound_iid_MOM} to be valid. 

\noindent (II) An upper bound of $d_\CD(\bm{S}_n,\bm{Z})$ for Gaussian sequences with a general dependence structure (i.e.\ where $(G_k)_{k \in \N}$ is not i.i.d.) can been found in~\cite{Basse-Bang_Quant}. The upper bound remains exponential in dimension $d$, with $d$-dependence given by $d^{125/24} e^{\log(3) d}$, and dependence on $n$ given by $n^{-1/2} \|\rho_n\|_{\ell^1(\Z)}^{3/2}$. 
\end{remark}

The proof of the lower bound of Theorem~\ref{thm:iid_MOM} is based on~\cite[Thm~3.1 \& Prop.~3.6]{MR2573557}, and relies on Malliavin calculus, introduced in Section~\ref{sec:preliminaries} below. The proof of the upper bound is based on~\cite[Thm~1.1]{MR2144310} and hypercontractivity of Hermite polynomials~\cite[Prop.~2.8.14]{MR2962301}.

\begin{remark}\label{sec:simulations}
    In this remark, we look at some simulations which visually support the results of Theorem~\ref{thm:iid_MOM}. By~\eqref{eq:KS_lower_bound}, it follows that $d_{\mathcal{K}}(\bm{S}_n,\bm{Z}) \ge d_{\mathcal{K}}(S_{n,d},Z_d)$, where $S_{n,d}$ (resp. $Z_d$) is the last entry in the vector $\bm{S}_n$ (resp. $\bm{Z}$). Hence, due to this and Remark~\ref{rem:size_of_N_d}, we can simulate the one-dimensional quantity $d_{\mathcal{K}}(S_{n,d},Z_d)$ for $n = 100{,}000$ and check if the finite size behaviour follows an exponential function as a function of $d$. We simulate for $d=2,\ldots,8$, which are plotted in Figure~\ref{fig:Simulations_MOM} below. Moreover, in Figure~\ref{fig:Simulations_MOM} we also fit function $d \mapsto a d^b e^{c d}$ for some $a,c>0$ and $b \in \R$. It is suggestive that the power-exponential function is a good fit. With only $7$ values, we get a fitted curve $d \mapsto 0.0015 d^{-2.19} e^{1.02 d}$, which asymptotically in $d$ is close to the lower bound found in Theorem~\ref{thm:iid_MOM}, namely $d \mapsto e^{d\cdot 3\log(2)/2}/(\sqrt{100{,}000}\, d^{3/4}(e\pi^32^3)^{1/4}e^{3\log(2)/2})$.

\begin{figure}[ht]
    \centering
%\begin{subfigure}[ht]{.52\linewidth}
 %   \centering\includegraphics[width=1\linewidth]{download (1).png}
   % \caption{\small ??}
  %\end{subfigure}
  %\begin{subfigure}[ht]{.415\linewidth}
    \centering\includegraphics[width=0.55\linewidth]{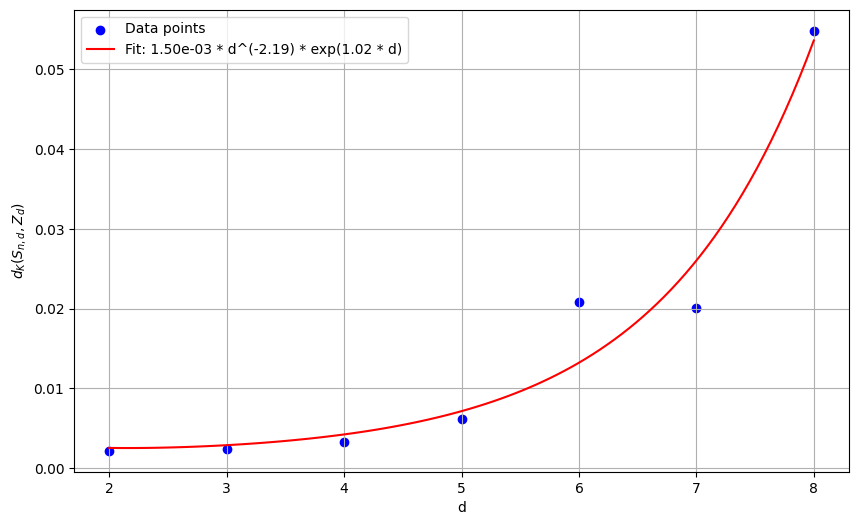}
   % \caption{\small ??}
  %\end{subfigure}
    \caption{\small Plot of $d \mapsto  d_{\mathcal{K}}(S_{n,d},Z_d)$ for the values $d=2\ld 8$ and $n= 100{,}000$, together with the best fitted curve $d \mapsto a d^b e^{c d}$.}
    \label{fig:Simulations_MOM}
\end{figure}
\end{remark}

\subsection{Hermite Polynomials in Statistical Tests}\label{sec:satat_tests}
Below, we discuss examples from the papers~\cite{MR381079,MR2067685,MR4814822}, and how they relate to the setting and results of this paper. The examples will be the Shenton--Bowman test, the general use of Hermite polynomials in a statistical test, and the use of higher moments in statistical tests for normality. As a part of these examples, we show an exact rate of convergence for the Shenton--Bowman test and higher-moment statistical tests.

We start by considering the general use of Hermite polynomials in statistical tests for normality, also called a Hermite Test (HT), as suggested in the paper~\cite[Sec.~3.2]{MR2067685}. For $d \ge 1$ and $\bm{S}_n$ as in~\eqref{eq:MOM_defn_S_n}, we have that %by Theorem~\ref{thm:iid_MOM} that 
$\bm{S}_n \cid \bm{Z}$ as $n \to \infty$ where $\bm{Z} \sim \mathcal{N}_d(\bm{0},\bm{I}_d)$. Since $\bm{Z}$ has identity covariance matrix% in $\bm{S}_n \cid \bm{Z}$ as $n \to \infty$
, we know that the test statistic $R^{\mathrm{HT}}_n$ (see~\cite[Sec.~3.2]{MR2067685}) defined in~\eqref{eq:stat_test_hermite}, which builds on the Hermite polynomials $H_1, H_3, \ldots, H_{d}$ (see~\cite[Eq.~(3.8)]{MR2067685}), satisfies
\begin{equation}\label{eq:stat_test_hermite}
    R_n^{\mathrm{HT}} \coloneqq \sum_{q=1}^{d}\left(\frac{1}{\sqrt{n}} \sum_{k=1}^n \frac{H_q\left(G_k\right)}{\sqrt{q!}}\right)^2 \cid  R \sim \chi^2(d), \quad \text{ as }n \to \infty.
\end{equation} The test statistic $R_n^{\mathrm{HT}}$ in~\eqref{eq:stat_test_hermite} is based on the $d$ first Hernmite polynomials and motivated by~\eqref{eq:equivalence_normal_dist}, i.e. a statistical test for normality. In Corollary~\ref{cor:HermiteTest}, we are interested in finding the rate at which $d_{\mathcal{K}}(R_n^{\mathrm{HT}},R) \to 0$ as $n \to \infty$ %for $R \sim \chi^2(d)$ 
and how this rate depends on $d$. 
\begin{remark}\label{rem:natural_d_C_conv}
Let $d \ge 1$, and consider a statistical test $R_n\coloneqq \|\bm{S}_n\|^2$, where $\bm{S}_n \in \R^d$ such that $\bm{S}_n \cid \bm{Z}$ as $n \to \infty$ with $\bm{Z}\sim \mathcal{N}_d(\bm{0},\bm{\Sigma})$ and $R\coloneqq \|\bm{Z}\|^2$. %Note that if $\bm{\Sigma}=\bm{I}_d$, then $R \sim \chi^2(d)$. 
Then, it follows that 
\begin{equation*}
    d_{\mathcal{K}}(R_n,R) \le d_{\CD}(\bm{S}_n,\bm{Z}), \qquad \text{ for all }n \in \N. 
\end{equation*}
%, which often can be controlled using Theorem~\ref{thm:iid_MOM}. % if $\bm{S}_n$ is defined as in~\eqref{eq:MOM_defn_S_n}. 
Using $d_{\CD}(\bm{S}_n,\bm{Z})$ is a natural upper bound, since considering $d_{\mathcal{K}}(R_n,R)=d_\mathcal{B}(\bm{S}_n,\bm{Z})$, where $\mathcal{B}\coloneqq \{B_{\bm{0}}(a):a \ge 0\} \subset \CD$, where $B_{\bm{0}}(a)\coloneqq \{\bm{x} \in \R^d: \|\bm{x}\| \le a \}$. Hence, since all sets in $\mathcal{B}$ are convex, it is natural to consider the upper bound $d_\mathcal{B}(\bm{S}_n,\bm{Z})\le d_\CD(\bm{S}_n,\bm{Z})$, concluding the statement. 
\end{remark}

\begin{corollary}\label{cor:HermiteTest}
    Let $d \ge 1$, $\bm{S}_n$ be defined in~\eqref{eq:MOM_defn_S_n} and let $\bm{Z} \sim \mathcal{N}_d(\bm{0},\bm{I}_d)$. Then, for $R_n^{\mathrm{HT}}$ and $R$ given in~\eqref{eq:stat_test_hermite}, and $C=58 e^{3\log(2)/2}/(e^{3\log(2)/2}-1)$, it follows that
    \begin{equation*}
        d_\mathcal{K}(R_n^{\mathrm{HT}},R) \le d_{\CD}(\bm{S}_n,\bm{Z}) \le C d^{3/4}e^{d \cdot  3\log(2)/2}\, n^{-1/2}, \quad \text{ for all } n,d \in \N.
    \end{equation*}
\end{corollary}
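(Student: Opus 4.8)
The plan is to recognise that Corollary~\ref{cor:HermiteTest} is a direct consequence of two results already available: Remark~\ref{rem:natural_d_C_conv} for the left inequality, and the upper bound of Theorem~\ref{thm:iid_MOM} for the right inequality. The only genuine observation needed is the identification of $R_n^{\mathrm{HT}}$ as a squared Euclidean norm. First I would note that, writing $S_{n,q}$ for the $q$th coordinate of $\bm{S}_n$ from~\eqref{eq:MOM_defn_S_n}, namely $S_{n,q} = n^{-1/2}\sum_{k=1}^n H_q(G_k)/\sqrt{q!}$, the definition~\eqref{eq:stat_test_hermite} gives exactly $R_n^{\mathrm{HT}} = \sum_{q=1}^d S_{n,q}^2 = \|\bm{S}_n\|^2$. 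Similarly $R = \|\bm{Z}\|^2$ with $\bm{Z}\sim\mathcal{N}_d(\bm{0},\bm{I}_d)$, so that $R\sim\chi^2(d)$ as asserted.

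With this identification in hand, the left inequality $d_\mathcal{K}(R_n^{\mathrm{HT}},R)\le d_{\CD}(\bm{S}_n,\bm{Z})$ is precisely the content of Remark~\ref{rem:natural_d_C_conv}, applied in the special case $\bm{\Sigma}=\bm{I}_d$: since the balls $B_{\bm{0}}(a)$ are convex, every left quadrant event for the radial statistic $R_n^{\mathrm{HT}}$ corresponds to a convex event for $\bm{S}_n$, and hence the supremum defining $d_\mathcal{K}(R_n^{\mathrm{HT}},R)$ is dominated by the supremum over all convex sets that defines $d_{\CD}(\bm{S}_n,\bm{Z})$. The right inequality $d_{\CD}(\bm{S}_n,\bm{Z})\le C d^{3/4}e^{d\cdot 3\log(2)/2}\,n^{-1/2}$ is then obtained by invoking the upper bound~\eqref{eq:up_low_bound_iid_MOM-1} of Theorem~\ref{thm:iid_MOM} with the choice $M=\CD$, which applies verbatim here because the hypotheses coincide: $(G_k)_{k\in\N}$ is i.i.d.\ Gaussian, $\bm{S}_n$ is the Hermite statistic of~\eqref{eq:MOM_defn_S_n}, and $\bm{Z}\sim\mathcal{N}_d(\bm{0},\bm{I}_d)$, with the same constant $C=58\,e^{3\log(2)/2}/(e^{3\log(2)/2}-1)$.

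I do not anticipate any real obstacle, since all the analytic difficulty is already absorbed into Theorem~\ref{thm:iid_MOM}; the corollary is a translation of that bound into the language of the radial test statistic $R_n^{\mathrm{HT}}$. The only point requiring a moment's care is to confirm that the setup of the corollary ($d\ge 1$, identity covariance, the specific normalisation of the Hermite coordinates) matches exactly the hypotheses of both Remark~\ref{rem:natural_d_C_conv} and the upper bound of Theorem~\ref{thm:iid_MOM}, so that both may be quoted without any additional argument, and then to chain the two displayed inequalities.
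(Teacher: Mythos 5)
Your proposal is correct and follows exactly the paper's own argument: identify $R_n^{\mathrm{HT}}=\|\bm{S}_n\|^2$ and $R=\|\bm{Z}\|^2$, apply Remark~\ref{rem:natural_d_C_conv} for the inequality $d_\mathcal{K}(R_n^{\mathrm{HT}},R)\le d_{\CD}(\bm{S}_n,\bm{Z})$, and then invoke the upper bound~\eqref{eq:up_low_bound_iid_MOM-1} of Theorem~\ref{thm:iid_MOM} with $M=\CD$. No gaps; this matches the paper's proof in both structure and constants.
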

The proof of Corollary~\ref{cor:HermiteTest} is in Section~\ref{sec:proofs} below, and is based solely on Remark~\ref{rem:natural_d_C_conv} and Theorem~\ref{thm:iid_MOM}. Note by Theorem~\ref{thm:iid_MOM}, that a lower bound for $d_\CD(\bm{S}_n,\bm{Z})$ exists. Indeed, for all $d\ge 2$ there exists a sequence $(N_d)_{d\ge 2}$ in $\N$, such that $c d^{-3/4}e^{d\cdot 3\log(2)/2}n^{-1/2} \le d_\CD(\bm{S}_n,\bm{Z})$. 

\subsection{Classical statistical tests}
The main idea of translating statistical tests into their Hermite expansions and then applying Theorem~\ref{thm:iid_MOM} yields a method that works well for classical statistical tests, which we study in this subsection. Below, we introduce the classical Shenton--Bowman test statistic and higher-order moment test statistics.

The Shenton--Bowman test relies on the skewness and kurtosis, and has a strong connection to the 3rd and 4th Hermite polynomials~\cite{MR381079,MR2067685}. The Shenton--Bowman test statistic relies on the fact, that if $X \sim \mathcal{N}(0,1)$, then $\E[X^3]=0$ and $\E[X^4]-3=0$. Under the null-hypothesis, we observe i.i.d.\ $\mathcal{N}(0,1)$ samples $G_1, G_2, \ldots, G_n$. Then, by~\cite[Eq.~(3.14)]{MR2067685}, it follows that 
\begin{equation}\label{eq:Shent_Bow_test}
    \wt{\bm{S}}_n \coloneqq \frac{1}{\sqrt{n}} \sum_{k=1}^n\left(G_k^3/\sqrt{15},(G_k^4-3)/\sqrt{96}\right)^\top \cid  \mathcal{N}_2\left(\bm{0},\bm{I}_2\right) , \quad  \text{ as }n \to \infty.
\end{equation} The Shenton--Bowman test statistic is given by 
\begin{equation}\label{eq_Shenton--Bowman}
    R_n^{\mathrm{SB}}\coloneqq \|\wt{\bm{S}}_n\|^2=\bigg(\frac{1}{\sqrt{n}} \sum_{k=1}^n G_k^3/\sqrt{15}\bigg)^2+\bigg(\frac{1}{\sqrt{n}} \sum_{k=1}^n\left(G_k^4-3\right)/\sqrt{96}\bigg)^2 \cid R \sim \chi^2(2), \, \text{ as }n \to \infty.
\end{equation}
We are interested in finding a rate of convergence for the Shenton--Bowman test statistic, i.e. the rate at which $d_\mathcal{K}(R_n^{\mathrm{SB}},R) \to 0$ as $n \to \infty$ %where $R \sim \chi^2(2)$
(see Corollary~\ref{cor:shenton_bowman} below).

A similar statistical test as the Shenton--Bowman test can be constructed for the higher-order moment test~\cite{MR4814822}, which use up to the 6th moment. The $HM4$ statistical test in~\cite[p.~4782]{MR4814822} translated to our notation with known mean and variance, i.e. no self-normalisation, is given by
\begin{align}\label{eq:HM4_stat_test_R}
\begin{aligned}
    R_n^{\mathrm{HM4}} &\coloneqq \bigg(\frac{1}{\sqrt{n}} \sum_{k=1}^n \frac{G_k^3}{\sqrt{15}}\bigg)^2+\bigg(\frac{1}{\sqrt{n}} \sum_{k=1}^n\frac{G_k^4-3}{\sqrt{96}}\bigg)^2 + \left( \frac{1}{\sqrt{n}} \sum_{k=1}^n \frac{G_k^5-10 G_k^3}{\sqrt{345}}\right)^2\\
    &\quad +\left( \frac{1}{\sqrt{n}} \sum_{k=1}^n \frac{G_k^6-15G_k^4+30}{\sqrt{4770}}\right)^2
    \cid \wt Z_1^2+\wt Z_2^2+\wt Z_3^2+\wt Z_4^2, \text{ as }n \to \infty,
\end{aligned}
\end{align} where $\wt{\bm{Z}}=(\wt Z_1\ld \wt Z_4)^\top \sim \mathcal{N}_4(\bm{0},\wt{\bm{\Sigma}})$, and $\wt{\bm{\Sigma}}%=\wt{\bm{\Delta}}\wt{\bm{\Delta}}^\top
$ is given by
\begin{equation}\label{eq:HM4_cov}
    \wt{\bm{\Sigma}} = 
   \begin{pmatrix}
       1 & 0 & -\frac{3 \sqrt{23}}{23} & 0  \\ 
       0 & 1 
       & -\frac{3\sqrt{115}}{46} & 0
        \\
       -\frac{3 \sqrt{23}}{23} & -\frac{3\sqrt{115}}{46} & 1 & \frac{45 \sqrt{3657}}{4876}  \\
       0 & 0 & \frac{45 \sqrt{3657}}{4876} & 1
   \end{pmatrix}.
\end{equation}
As in the Shenton--Bowman case, we are interested in the rate of convergence for $d_{\mathcal{K}}(R_n^{\mathrm{HM4}},\|\wt{\bm{Z}}\|^2)\to 0$ as $n \to \infty$ (see Corollary~\ref{cor:shenton_bowman} below). Note that $\wt{\bm{\Sigma}} \ne \bm{I}_d$, and therefore $\|\wt{\bm{Z}}\|^2$ is not $\chi^2(4)$-distributed.

As discussed in Remark~\ref{rem:natural_d_C_conv}, it is natural in both the case of $R_n^{\mathrm{SB}}$ and $R_n^{\mathrm{HM4}}$, to study the rate of convergence through Theorem~\ref{thm:iid_MOM}, and hence through $d_\CD(\bm{S}_n,\bm{Z})$ where $\bm{S}_n$ is given in~\eqref{eq:MOM_defn_S_n}, and $\bm{Z} \sim \mathcal{N}_d(\bm{0},\bm{I}_d)$ for some appropriate $d$. 

\begin{corollary}\label{cor:shenton_bowman}
Let $\bm{S}_n$ be given as in~\eqref{eq:MOM_defn_S_n}, and assume that $(G_k)_{k \in \N}$ is an i.i.d.\ Gaussian sequence. Then the following statements hold.

    \noindent{(a)} Let $d=4$, and assume that $R_n^{\mathrm{SB}}$ is the Shenton--Bowman test statistic given in~\eqref{eq_Shenton--Bowman}, and $R \sim \chi^2(2)$. Then for $\bm{Z} \sim \mathcal{N}_4(\bm{0},\bm{I}_4)$, it follows that
    \begin{equation*}
        d_\mathcal{K}\big(R_n^{\mathrm{SB}},R\big) \le d_\CD(\bm{S}_n,\bm{Z}) \le  924 \, n^{-1/2}, \quad \text{ for all }n \in \N.
    \end{equation*}

    \noindent{(b)} Let $d=6$, and assume that $R_n^{\mathrm{HM4}}$ is the statistical test from~\eqref{eq:HM4_stat_test_R} with $\wt{\bm{Z}} \sim \mathcal{N}_4(\bm{0},\wt{\bm{\Sigma}})$ and $\wt{\bm{\Sigma}}$ given in~\eqref{eq:HM4_cov}. Then, for $\bm{Z} \sim \mathcal{N}_6(\bm{0},\bm{I}_6)$, it follows that
    \begin{equation*}
        d_\mathcal{K}\big(R_n^{\mathrm{HM4}},\|\wt{\bm{Z}}\|\big) \le d_\CD(\bm{S}_n,\bm{Z}) \le  673{,}795\, n^{-1/2}, \quad \text{ for all }n \in \N.
    \end{equation*} 
\end{corollary}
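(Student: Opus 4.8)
The plan is to prove both parts by the same two-step scheme: first reduce the scalar test statistic to the convex distance $d_\CD(\bm S_n,\bm Z)$ for the full Hermite vector~\eqref{eq:MOM_defn_S_n} of the appropriate dimension, and then bound that convex distance using the upper-bound half of Theorem~\ref{thm:iid_MOM}. The reduction is the linear-map version of Remark~\ref{rem:natural_d_C_conv}, with $\|\cdot\|^2$ replaced by $\|L\,\cdot\|^2$ for a fixed matrix $L$. Concretely, I would first pass from raw powers to the Hermite basis via $x^3=H_3(x)+3H_1(x)$, $x^4-3=H_4(x)+6H_2(x)$, $x^5-10x^3=H_5(x)-15H_1(x)$ and $x^6-15x^4+30=H_6(x)-45H_2(x)$. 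Each coordinate of the Shenton--Bowman vector $\wt{\bm S}_n$ in~\eqref{eq_Shenton--Bowman} (resp.\ of the $\mathrm{HM4}$ vector in~\eqref{eq:HM4_stat_test_R}) is then a fixed linear combination of the coordinates of $\bm S_n$ with $d=4$ (resp.\ $d=6$), so that $\wt{\bm S}_n=L\bm S_n$ for a deterministic $2\times4$ (resp.\ $4\times6$) matrix $L$. Since $\bm S_n\cid\bm Z\sim\mathcal N_d(\bm0,\bm I_d)$, the limit is $L\bm Z\sim\mathcal N(\bm0,LL^\top)$, and a direct computation identifies $LL^\top=\bm I_2$ in case (a) — whence $\|L\bm Z\|^2\eqd R\sim\chi^2(2)$ — and $LL^\top=\wt{\bm\Sigma}$ from~\eqref{eq:HM4_cov} in case (b), whence $\|L\bm Z\|\eqd\|\wt{\bm Z}\|$.

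The geometric step is then immediate. Writing $R_n^{\mathrm{SB}}=\|L\bm S_n\|^2$, for every $t\ge0$ the event $\{R_n^{\mathrm{SB}}\le t\}$ equals $\{\bm S_n\in C_t\}$ with $C_t\coloneqq\{\bm x\in\R^d:\|L\bm x\|^2\le t\}$, and $C_t$ is convex, being the preimage of the ball $B_{\bm0}(\sqrt t)$ under the linear map $L$. The analogous identity holds for the limit, so taking the supremum over $t\ge0$ gives, exactly as in Remark~\ref{rem:natural_d_C_conv},
\[ d_\mathcal K(R_n^{\mathrm{SB}},R)\le d_\CD(\bm S_n,\bm Z)\quad\text{and}\quad d_\mathcal K(R_n^{\mathrm{HM4}},\|\wt{\bm Z}\|)\le d_\CD(\bm S_n,\bm Z). \]
This proves the left inequalities and reduces each claim to an explicit bound on $d_\CD(\bm S_n,\bm Z)$ at $d=4$ and $d=6$.

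It remains to bound $d_\CD(\bm S_n,\bm Z)$ through the upper bound of Theorem~\ref{thm:iid_MOM}. To reach the stated constants $924$ and $673{,}795$, rather than substituting $d=4,6$ into the clean exponential bound~\eqref{eq:up_low_bound_iid_MOM-1}, I would re-run its proof at fixed dimension: apply the convex-set Berry--Esseen inequality~\cite[Thm~1.1]{MR2144310} to the i.i.d.\ centred summands $(H_1(G)/\sqrt{1!},\ldots,H_d(G)/\sqrt{d!})$, which have identity covariance, and evaluate the Lyapunov term $\E[(\sum_{q=1}^d H_q(G)^2/q!)^{3/2}]$ for $d=4,6$ as precisely as needed, rather than through the lossy hypercontractivity estimate $\sum_q 2^{3q/2}$ producing the exponential constant $C$. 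This last computation is the main obstacle: the formula~\eqref{eq:up_low_bound_iid_MOM-1} is far from tight at small $d$, so the numbers require evaluating (or sufficiently sharply bounding) this third absolute moment of the Hermite vector — using hypercontractivity~\cite[Prop.~2.8.14]{MR2962301} together with exact Gaussian moments — before feeding it into the explicit Berry--Esseen constant. Verifying the Hermite expansions, the convexity of the sub-level sets, and that $LL^\top$ reproduces $\bm I_2$ and $\wt{\bm\Sigma}$ is then routine linear algebra.
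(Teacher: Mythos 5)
Your proposal is correct and follows essentially the same route as the paper: rewrite the raw-moment statistics in the Hermite basis as $L\bm{S}_n$, reduce to $d_\CD(\bm{S}_n,\bm{Z})$ (the paper does this by extending $L$ to an invertible matrix $\bm{\Delta}$ and using affine invariance of $d_\CD$, whereas you invoke convexity of the sub-level sets $\{\bm{x}:\|L\bm{x}\|^2\le t\}$ directly --- both are valid), and then obtain the numerical constants by re-evaluating the Lyapunov term at $d=4,6$ instead of using the lossy exponential bound. The sharpening you describe is exactly the paper's Remark~\ref{rem:alt_up_bound_d}, which bounds $\E[|H_i(G_1)|^3]$ by $\E[H_i(G_1)^4]^{3/4}$ via Jensen and exact Gaussian fourth moments; you stop short of carrying out that arithmetic, but the quantity to compute is correctly identified.
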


Note that the proof of Corollary~\ref{cor:shenton_bowman} is based on Remarks~\ref{rem:natural_d_C_conv} \&~\ref{rem:alt_up_bound_d} together with Theorem~\ref{thm:iid_MOM}, and is stated in Section~\ref{sec:proofs} below. The main idea is to translate the setting from the standardised moments to Hermite polynomials and then use Theorem~\ref{thm:iid_MOM} together with Remark~\ref{rem:alt_up_bound_d}, yielding an exact upper bound.

The key conclusion of Corollary~\ref{cor:shenton_bowman} is that the constant $924$ associated with the statistic $R^{\mathrm{SB}}_n$ is significantly smaller than the corresponding constant $ 673{,}795$ for $R^{\mathrm{HM4}}_n$. This aligns with the theoretical expectation that the statistical cost grows exponentially with the order of the highest moment involved in the test. Although the test statistic $R^{\mathrm{HM4}}_n$ may be more sensitive to fine-grained deviations from the null distribution, its reliance on a higher-order moment (specifically the 6th) incurs a substantially greater sample complexity. In practical terms, this means that $R^{\mathrm{HM4}}_n$ requires significantly more data to achieve reliability comparable to $R^{\mathrm{SB}}_n$.

\begin{remark}
Additional statistical tests that rely on higher-order moments are discussed in~\cite[p.~4782]{MR4814822}. In particular, the authors examine the use of each component of the $HM4$ statistic as individual directional test statistics based on specific higher moments. These parametric directional tests, denoted $R^{\mathrm{M5}}_n$, $R^{\mathrm{M6}}_n$, and $R^{\mathrm{HM2}}_n$ (in the absence of self-normalisation), are, as $n \to \infty$, given by
\begin{gather}
R_n^{\mathrm{M5}}  \coloneqq \left( \frac{1}{\sqrt{n}} \sum_{k=1}^n \frac{G_k^5-10 G_k^3}{\sqrt{345}}\right)^2 \cid \chi^2(1), \\
R_n^{\mathrm{M6}} \coloneqq \left( \frac{1}{\sqrt{n}} \sum_{k=1}^n \frac{G_k^6-15G_k^4+30}{\sqrt{4770}}\right)^2 \cid \chi^2(1), \, \text{ and } \, R_n^{\mathrm{HM2}} \coloneqq  R_n^{\mathrm{M5}}+R_n^{\mathrm{M6}}\cid \chi^2(2).
\end{gather} Note that $R_n^{\mathrm{HM2}} \cid \chi^2(2)$ as $n \to \infty$, since the polynomials $H_5(G)-15H_1(G)$ and $ H_6(G)-45H_2(G)$ are orthogonal in $L^2(\Omega)$ for $G \sim \mathcal{N}(0,1)$. The same approach as in Corollary~\ref{cor:shenton_bowman} can be used to construct a convergence rate for the statistical tests $R^{\mathrm{M5}}_n$, $R^{\mathrm{M6}}_n$ and $R^{\mathrm{HM2}}_n$. Due to the exponential dependence on the order of the highest moment, the rate of convergence is rather slow, which is also discussed in~\cite[p.~4782]{MR4814822}, where the authors state ``Being based on statistics with slow rates of convergence to asymptotic normal values...''. However, no theoretical argument is given for why this is the case. Nevertheless, this can be seen from the fact that the rate depends exponentially on the magnitude of the size of the highest moment in the statistical test, as shown in this example.
\end{remark}

\section{Preliminaries}\label{sec:preliminaries}
In this section, we introduce the most relevant results from Malliavin calculus needed in the paper. For a full monograph on Malliavin calculus, see~\cite{MR2200233}. Let $\mH$ be a real separable Hilbert space with inner product $\langle \cdot,\cdot\rangle_\mH$ and norm $\|\cdot\|_\mH=\langle \cdot,\cdot\rangle_\mH^{1/2}$. We call $X=\{X(h):h \in \mH\}$ an \emph{isormormal Gaussian process} over $\mH$, where $X$ is defined on some probability space $(\Omega,\mathcal{F},\mathds{P})$, whenever $X$ is a centered Gaussian family indexed by $\mH$ such that $\E[X(h)X(g)]=\langle h,g\rangle_{\mH}$. Define $\mathcal{F}$ to be the $\sigma$-algebra generated by $X$, i.e. $\mathcal{F}=\sigma\{X\}$, and write $L^2(\Omega,\mathcal{F},\mathds{P})=L^2(\Omega)$, where we say that $F \in L^2(\Omega)$ if $\E[|F|^2]<\infty$. The \emph{Wiener-It\^o chaos expansion} (see~\cite[Cor.~2.7.8]{MR2962301}), states, for any $F \in L^2(\Omega)$, that $F=\E[F]+\sum_{q=1}^\infty I_q(f_q)$ where $I_0(f_0)=\E[F]$, $I_q(f_q)$ is the $q$'th multiple Wiener-It\^o integral and the kernels $f_q$ are in the symmetric $q$'th tensor product $ \mH^{\odot q}$. For an $F \in L^2(\Omega)$, we write $F \in \mD$ if $\sum_{q=1}^\infty q!q\|f_q\|_{\mH^{\otimes q}}^2<\infty$.
,% (we will throughout suppress the $\Omega$ and write $\mD$).
and for $F \in \mD$ we let the random element $D F$ with values in $\mH$ be the \emph{Malliavin derivative}. Moreover, such $F$ satisfies the \emph{chain rule}, indeed, let $\bm{F}=(F_1\ld F_d)$ where $F_i \in \mD$ for $i=1\ld d$ and let $\phi:\R^d \to \R$ be a continuous differentiable function with bounded partial derivatives. Then, by~\cite[Prop.~1.2.3]{MR2200233}, $\phi(\bm{F}) \in \mD$ and $D\phi(\bm{F})=\sum_{i=1}^d \big( \tfrac{\partial}{\partial x_i} \phi(\bm{F})\big)\cdot D F_i$.

For $(G_k)_{k \in \N}$ being an i.i.d.\ sequence of $\mathcal{N}(0,1)$ variables, we have that $\rho(j-k)=\E[G_jG_k]=0$ if $i \ne j$ and $\rho(0)=1$. By~\cite[Rem.~2.1.9]{MR2962301} we may choose an isonormal Gaussian process $\{X(h) \in \mH\}$, such that $(G_k)_{k \in \N} \eqd \{X(e_k):k \in \N\}$ where $(e_k)_{k \in \N}\subset \mH$ verifying $\langle e_k,e_j\rangle_\mH = \rho(j-k)$ for all $j,k \in \N$.  It then follows by~\cite[Thm~2.7.7]{MR2962301} that $H_q(G_r)\eqd I_q(e^{\otimes q}_r)$ for all $q \ge 1$ and $r \in \N$. Since $D(G_r)=e_r$ by~\cite[Rem.~2.9.2]{MR2962301}, and $H_q'(x)=q H_{q-1}(x)$ by~\cite[Prop.~1.4.2(i)]{MR2962301}, the chain-rule implies that $DH_q(G_r)=qH_{q-1}(G_r)e_r$ for all $q\ge 2$ and $r \in \N$. Finally, the $r$th contraction operator $\otimes_r$ is defined in~\cite[App.~B.4]{MR2962301}.

\section{Proofs}\label{sec:proofs} 

\subsection{Proof of Theorem~\ref{thm:iid_MOM}}
To prove the lower bound in Theorem~\ref{thm:iid_MOM}, the ensuing lemma is needed. 
\begin{lemma}\label{lem:lower_bound_Hermite_var} 
Assume that $\bm{S}_n$ is given by~\eqref{eq:MOM_defn_S_n}, that $(G_k)_{k \in \N}$ is an i.i.d.\ Gaussian sequence and that $\bm{Z} \sim \mathcal{N}(\bm{0},\bm{I}_d)$. For all $d \ge 2$, there exists some $N_d \in \N$, such that, for all $n \ge N_d$,
    \begin{align*}
        d_{\mathcal{K}}(\bm{S}_n,\bm{Z}) \ge \frac{1}{\sqrt{n}(e\pi^32^3)^{1/4}} \begin{dcases}
            d^{-d/4}e^{3d\log(2)/2}, & \text{ if }d \text{ is even},\\
            (d-1)^{-3/4}e^{3(d-1)\log(2)/2}, & \text{ if }d \text{ is odd}.
        \end{dcases}
    \end{align*}
\end{lemma}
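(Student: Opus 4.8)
The plan is to collapse the $d$-dimensional comparison onto a single well-chosen coordinate and then feed that coordinate into the exact Berry--Esseen asymptotics of Nourdin--Peccati. First I would project: taking $b_i=+\infty$ for every index except one index $m$ in the family $\mathcal{K}$ of~\eqref{eq:hyper_rect_defn} shows immediately that $d_{\mathcal{K}}(\bm{S}_n,\bm{Z})\ge d_{\mathcal{K}}(S_{n,m},Z_m)$, where $S_{n,m}=n^{-1/2}\sum_{k=1}^n H_m(G_k)/\sqrt{m!}$ is the $m$-th entry of $\bm{S}_n$ in~\eqref{eq:MOM_defn_S_n} and $Z_m\sim\mathcal{N}(0,1)$. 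This is the promised reduction to a one-dimensional problem, and it also dictates the even/odd split: for odd $q$ the polynomial $H_q$ is odd, so $H_q(G_k)$ is symmetric and its third moment vanishes, destroying the leading correction term. I would therefore take $m=d$ when $d$ is even and $m=d-1$ when $d$ is odd, so that $m$ is always even and a third-cumulant-driven lower bound is available.

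Second, I would recognise $S_{n,m}$ as a normalised element of a fixed Wiener chaos. By the identifications recalled in Section~\ref{sec:preliminaries}, $H_m(G_k)\eqd I_m(e_k^{\otimes m})$, so $S_{n,m}=I_m(g_n)$ with $g_n=n^{-1/2}(m!)^{-1/2}\sum_{k}e_k^{\otimes m}$, satisfying $\E[S_{n,m}^2]=1$ and $S_{n,m}\cid Z_m$ as $n\to\infty$. For such a sequence \cite[Thm~3.1 \& Prop.~3.6]{MR2573557} give the exact asymptotics of $d_{\mathcal{K}}(S_{n,m},Z_m)$ in terms of the third cumulant $\kappa_3(S_{n,m})=\E[S_{n,m}^3]$, namely $d_{\mathcal{K}}(S_{n,m},Z_m)\sim c_0\,|\kappa_3(S_{n,m})|$ for an explicit $c_0>0$; in particular $d_{\mathcal{K}}(S_{n,m},Z_m)\ge \tfrac{c_0}{2}|\kappa_3(S_{n,m})|$ for all $n$ beyond some threshold $N_d$. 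The threshold is exactly the footprint of this being an $n\to\infty$ statement for each fixed $d$.

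Third, I would compute $\kappa_3(S_{n,m})$ exactly and extract its growth. Since the summands are i.i.d., centred and of unit variance, $\kappa_3(S_{n,m})=n^{-1/2}\,\E[(H_m(G_1)/\sqrt{m!})^3]$, and the Hermite triple-product formula gives $\E[H_m(G_1)^3]=(m!)^3/((m/2)!)^3$ for even $m$, whence $\kappa_3(S_{n,m})=n^{-1/2}(m!)^{3/2}/((m/2)!)^3$. Writing $m=2\ell$ and applying Stirling's formula in the form of explicit two-sided bounds to $(2\ell)!$ and $\ell!$ yields $(m!)^{3/2}/((m/2)!)^3\asymp m^{-3/4}\,2^{3m/2}=m^{-3/4}e^{3m\log(2)/2}$, i.e.\ the exponential factor together with the polynomial correction. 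Substituting $m=d$ (even $d$) and $m=d-1$ (odd $d$), and collecting the constant $c_0$ with the constants from the Stirling bounds, produces the stated two-case estimate.

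The main obstacle lies in the second step: I must verify that the third cumulant genuinely governs the asymptotics, i.e.\ that the fourth-cumulant contribution is negligible, so that the Nourdin--Peccati expansion is not spoiled. For the i.i.d.\ sum this is transparent, since $\kappa_3(S_{n,m})=\Oh(n^{-1/2})$ while $\kappa_4(S_{n,m})=\Oh(n^{-1})$, so $\kappa_4=\oh(\kappa_3)$ for fixed $d$; this is precisely what forces the bound to hold only for $n\ge N_d$ and makes $N_d$ grow with $d$. The remaining delicate point is bookkeeping: carrying the exact constant from \cite{MR2573557} together with rigorous (non-asymptotic) Stirling bounds all the way to the explicit prefactor, while keeping control of the $d$-dependence of $N_d$ quantified separately in Remark~\ref{rem:size_of_N_d}.
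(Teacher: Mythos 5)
Your proposal is correct and follows essentially the same route as the paper: projection onto the last (even-indexed) coordinate, reduction to the exact Berry--Esseen asymptotics of \cite[Thm~3.1 \& Prop.~3.6]{MR2573557} for a fixed Wiener chaos, identification of the leading term with the third moment $\E[H_m(G_1)^3]=(m!)^3/((m/2)!)^3$, and Stirling bounds. The only cosmetic difference is that the paper expresses this third-moment quantity through the contraction $\langle f(n,d),f(n,d)\wt\otimes_{d/2}f(n,d)\rangle$ and verifies hypotheses (3.6)--(3.8) of that proposition explicitly, whereas you state the same verification in cumulant language ($\kappa_4=\oh(\kappa_3)$); both yield the same $d^{-3/4}e^{3d\log(2)/2}n^{-1/2}$ rate.
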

Throughout the paper, we will often apply Stirling's inequality~\cite[Eqs.~(1) \&~(2)]{MR69328}, given by
\begin{equation}\label{eq:Stirling's_formula}
    \sqrt{2\pi n}\left( \frac{n}{e}\right)^n 
    <
     %\sqrt{2\pi n}\left( \frac{n}{e}\right)^ne^{\frac{1}{12n+1}}
     %<
      n!
      <
      %\sqrt{2\pi n}\left( \frac{n}{e}\right)^ne^{\frac{1}{12n}}
      %\le 
      e^{\frac{1}{12}}\sqrt{2\pi n}\left( \frac{n}{e}\right)^n, \quad \text{ for all }n \in \N.
\end{equation} 
\begin{proof}
    Let $d \ge 2$, and recall that $d_{\mathcal{K}}$ is a weaker metric than $d_\CD$. Hence, 
    \begin{equation}\label{eq:KS_lower_bound}
        d_\CD(\bm{S}_n,\bm{Z}) \ge d_{\mathcal{K}}(\bm{S}_n,\bm{Z}) \ge \begin{dcases}
            d_{\mathcal{K}}(S_{n,d},Z_d), & \text{ if } d \text{ is even},\\
            d_{\mathcal{K}}(S_{n,d-1},Z_{d-1}), & \text{ if } d \text{ is odd}.
        \end{dcases}
    \end{equation} Here $S_{n,d}$ (resp. $S_{n,d-1}$) is the last (resp. penultimate) element of $\bm{S}_n$ and likewise $Z_d$ (resp. $Z_{d-1}$) is the last (resp. penultimate) element in $\bm{Z}$. Since the argument is now reduced to a one-dimensional problem, we will assume without loss of generality that $d$ is even and fixed. If $d$ was odd, we should just consider $d-1$, which in this case is even. 
    
    It thus suffices to find a lower bound in the one-dimensional setting $d_{\mathcal{K}}(S_{n,d},Z_d)$. We will use~\cite[Thm~3.1 \& Prop.~3.6]{MR2573557} to construct such a lower bound. Since $\rho(v)=0$ for all $v\in \Z \setminus\{0\}$ and by~\cite[Prop.~2.2.1]{MR2962301}, it follows that $\var(S_{n,d})=(d! n)^{-1} \sum_{k,r=1}^n \E[H_d(G_k)H_d(G_r)] = n^{-1}\sum_{k,r=1}^n \rho(k-r)=1$. We now verify the assumptions in~\cite[Prop.~3.6]{MR2573557}. First, we note that $S_{n,d}$ is indeed in a fixed Wiener chaos, since $S_{n,d}\eqd I_d\big((d!n)^{-1/2}\sum_{k=1}^n e_k^{\otimes d}\big)\coloneqq I_d(f(n,d))$ for $f(n,d) \in \mH^{\odot d}$. To verify~\cite[Prop.~3.6]{MR2573557}, we start by calculating $\Psi(n)\coloneqq \E[(1-d^{-1} \| D S_{n,d}\|_{\mH}^2)^2]^{1/2}$, and to do so, we note that
    \begin{equation*}
        \| D S_{n,d}\|_{\mH}^2%=\frac{1}{d! n}\sum_{k,r=1}^n\langle DH_d(G_k),DH_d(G_r)\rangle_\mH
        =\frac{d}{(d-1)! n}\sum_{k,r=1}^nH_{d-1}(G_k)H_{d-1}(G_r)\rho(k-r)=\frac{d}{(d-1)! n}\sum_{k=1}^nH_{d-1}(G_k)^2.
    \end{equation*} Hence, by definition of $\Psi(n)$, using linearity of expectations and~\cite[Prop.~ 2.2.1]{MR2962301}, it follows that
    \begin{align*}
        \Psi(n)^2
        &=
        1-\frac{2}{(d-1)!n}\sum_{k=1}^n \E[H_{d-1}(G_k)^2]+\frac{1}{((d-1)!)^2n^2}\sum_{k,r=1}^d \E[H_{d-1}(G_k)^2H_{d-1}(G_r)^2]\\
        &=
        -1+\frac{1}{((d-1)!)^2n^2} \Bigg( \sum_{\underset{k \ne r}{k,r \in \{1\ld n\}:}}\E[H_{d-1}(G_k)^2] \E[H_{d-1}(G_r)^2]+\sum_{k=1}^n \E[H_{d-1}(G_k)^4]\Bigg)\\
        &=
        -1+\frac{\left( n(n-1)((d-1)!)^2+n \E[H_{d-1}(G_1)^4]\right)}{((d-1)!)^2n^2} =\frac{1}{n}\left( \frac{\E[H_{d-1}(G_1)^4]}{((d-1)!)^2}-1\right).
    \end{align*}
    We can now verify the assumptions of~\cite[Eq.~(3.6), (3.7) \&~(3.8)]{MR2573557}. Let $s \in \{1\ld d-1\}$, and note by definition of the kernels $f(n,d)$, that
    \begin{align}
        \| f(n,d) \otimes_s f(n,d)\|_{\mH^{\otimes(2d-2s)}}^2&=\sum_{k,k',r,r'=1}^n \frac{1}{(d!)^2n^2} \langle e_k^{\otimes d}\otimes_s e_{k'}^{\otimes d},e_{r}^{\otimes d}\otimes_s e_{r'}^{\otimes d}\rangle_{\mH^{\otimes(2d-2s)}}, \, \text{ where}\nonumber \\
        e_k^{\otimes d}\otimes_s e_{k'}^{\otimes d}&=\left( \prod_{\ell=1}^r \langle e_k,e_{k'}\rangle_{\mH}\right)e_k^{\otimes (d-s)}\otimes e_{k'}^{\otimes (d-s)} = \begin{dcases}
            0, &\text{ if }k \ne k',\\
            e_k^{\otimes (2d-2s)}, &\text{ if } k=k'.
        \end{dcases} \label{eq:otimes_s_general}
    \end{align} Thus, since $\langle e_k^{\otimes(2d-2s)},e_r^{\otimes(2d-2s)}\rangle_{\mH^{\otimes (2d-2s)}}=\prod_{\ell=1}^{2d-2s} \langle e_k,e_r \rangle_{\mH}=\1_{\{k=r\}}$,
    it follows that $\| f(n,d) \otimes_s f(n,d)\|_{\mH^{\otimes(2d-2s)}}^2=((d!)^2n)^{-1} \to 0$, as $n \to \infty$, verifying assumption~\cite[Eq.~(3.6)]{MR2573557}. It follows directly that $(1-d!\|f(d,n)\|_{\mH^{\otimes d}})/\Psi(n)=0$ for all $n\ge 2$ since $\var(S_{n,d})=1$, verifying~\cite[Eq.~(3.7)]{MR2573557}. Let $s \in \{1\ld  d-1\}$ and $\ell \in \{1\ld  2d-2s-1\}$. Note by construction and symmetry of $f(n,d)$, that
    \begin{equation*}
        g(n,d,s)\coloneqq f(n,d) \wt{\otimes}_s f(n,d) = \frac{1}{d!n} \sum_{k,r=1}^n e_k^{\otimes d} \otimes_s e_r^{\otimes d}=\frac{1}{d!n} \sum_{k=1}^n e_k^{\otimes (2d-2s)}.
    \end{equation*} By~\eqref{eq:otimes_s_general} and since $\rho(v)=0$ for all $v \in \Z\setminus\{0\}$, it follows that
   \begin{equation*}
        g(n,d,s) \otimes_\ell g(n,d,s)
       =
        \frac{1}{(d!)^2n^2}\sum_{k,r=1}^n e_k^{\otimes (2d-2s)}\otimes_\ell e_r^{\otimes (2d-2s)}
        =
        \frac{1}{(d!)^2n^2} \sum_{k=1}^n e_k^{\otimes (2(2d-2s)-2\ell)}.
    \end{equation*} Applying this, we see that
    \begin{align*}
        &\|g(n,d,s) \otimes_\ell g(n,d,s)\|_{\mH^{\otimes (2(2d-2s)-2\ell)}}\Psi(n)^{-2}\\
        &\qquad = \Psi(n)^{-2}\bigg(\frac{1}{(d!)^4n^4} \sum_{k,r=1}^n \langle e_k^{\otimes (2(2d-2s)-2\ell)},e_r^{\otimes (2(2d-2s)-2\ell)}\rangle_{\mH^{\otimes (2(2d-2s)-2\ell)}} \bigg)^{1/2}\\
        &\qquad = \frac{1}{(d!)^2 n^{3/2}\Psi(n)^{2}} \longrightarrow 0, \quad \text{ as } n \to \infty,
    \end{align*} verifying~\cite[Eq.~(3.8)]{MR2573557}. Finally, note by definition of $\Psi(n)$, that 
    \begin{align*}
        \zeta(d)&\coloneqq -dd!(d/2-1)!\binom{d-1}{d/2-1}^2 \Psi(n)^{-1}\langle f(n,d),f(n,d)\wt\otimes_{d/2}f(n,d)\rangle_{\mH^{\otimes d}}\\
        &=-d(d!)^{-1/2}(d/2-1)!\binom{d-1}{d/2-1}^2 \left( \frac{\E[H_{d-1}(G_1)^4]}{((d-1)!)^2}-1\right)^{-1}, %(\sqrt{n}\Psi(n))^{-1}, 
    \end{align*} since $f(n,d)\wt\otimes_{d/2} f(n,d)=(d!n)^{-1}\sum_{k=1}^n e_k^{\otimes d}$, which implies that $\langle f(n,d),f(n,d)\wt\otimes_{d/2}f(n,d)\rangle_{\mH^{\otimes d}} = (d!)^{-3/2}n^{-1/2}$. Note that $\zeta(d)$ purely depends on $d$, and hence does not depend on $n$. 
    
    Applying~\cite[Thm~3.1 \& Prop.~3.6]{MR2573557}, it now follows that there exists some $N_d \in \N$ such that
    \begin{equation*}
        d_{\mathcal{K}}(S_{n,d},Z_d) \ge (3/\sqrt{2\pi})\Psi(n)|\zeta(d)|, \quad \text{ for all }n \ge N_d.
    \end{equation*} The proof can now be concluded, by using the definition of $\zeta(d)$ and multiple applications of Stirling's approximation~\eqref{eq:Stirling's_formula} to get a lower bound on $\Psi(n)|\zeta(d)|$:
    \begin{align*}
        \sqrt{n}\Psi(n)|\zeta(d)|&= \frac{d(d/2-1)! \binom{d-1}{d/2-1}^2}{\sqrt{d!}}=\frac{\sqrt{d!}(d-1)!}{(d/2-1)!((d/2)!)^2} \\
        %&\ge e^{-1/4}\frac{(2\pi d)^{1/4}(d/e)^{d/2}\sqrt{2\pi(d-1)}((d-1)/e)^{d-1}}{\sqrt{2\pi(d/2-1)}((d/2-1)/e)^{d/2-1} \pi d (d/(2e))^{d}}\\
        &\ge
        \frac{1}{(2e\pi^3)^{1/4}}\left( \frac{d-2}{d-1}\right)^{1/2}\frac{(d-1)^d}{d^{d/2}(d-2)^{d/2}} \frac{2^{3d/2}}{d^{3/4}} \ge \frac{1}{(2^3 e \pi^3)^{1/4}} e^{3d\log(2)/2-3\log(d)/4}.\qedhere
    \end{align*} 
\end{proof}

\begin{proof}[Proof of Theorem~\ref{thm:iid_MOM}]
    Assume now that $(G_k)_{k \in \N}$ is an i.i.d.\ Gaussian sequence. We start by proving the lower bound in~\eqref{eq:up_low_bound_iid_MOM}. For all $d \ge 2$, Lemma~\ref{lem:lower_bound_Hermite_var} implies that there exists some $N_d \in \N$ such that it for all $n \ge N_d$ 
    \begin{align*}
        d_{\mathcal{K}}(\bm{S}_n,\bm{Z}) \ge \frac{1}{\sqrt{n}(e\pi^32^3)^{1/4}} \begin{dcases}
            e^{3d\log(2)/2-3\log(d)/4}, & \text{ if }d \text{ is even},\\
            e^{3(d-1)\log(2)/2-3\log(d-1)/4}, & \text{ if }d \text{ is odd}.
        \end{dcases}
    \end{align*}Thus, since $\log(d+1)\ge \log(d)$, it follows that
    \begin{equation}\label{eq:explicit_lower_bound}
        d_{\mathcal{K}}(\bm{S}_n,\bm{Z})\ge \frac{1}{(e\pi^32^3)^{1/4}e^{3\log(2)/2}} \frac{e^{3d\log(2)/2}}{d^{3/4}\sqrt{n}}, \qquad \text{ for all }n \ge N_d.
    \end{equation} 

    Next, we prove the upper bound in~\eqref{eq:up_low_bound_iid_MOM-1} for all $d \ge 1$. Note that $\cov(\bm{S}_n)=\cov(\bm{Z})=\bm{I}_d$, since the orthogonality of the Hermite polynomials yields $\cov(\bm{S}_n)_{i,j}=0$ if $i \ne j$ and moreover if $i=j$ the assumption of i.i.d.\ together with~\cite[Prop.~2.2.1]{MR2962301} yields
    \begin{equation*}
    \cov(\bm{S}_n)_{i,i}=\var\left(\frac{1}{\sqrt{n}}\sum_{k=1}^n \frac{H_{i}(G_k)}{\sqrt{i!}}\right)= \var\left( \frac{H_i(G_1)}{\sqrt{i!}}\right)=1.
    \end{equation*}
    Hence, by~\cite[Thm~1.1]{MR4003566} (see also~\cite[Thm~1.1]{MR2144310}) and Jensen's inequality, it follows that 
    \begin{equation}\label{eq:up_bound_w_HP}
       % d_{\HR}(\bm{S}_n,\bm{Z})\le 
       d_\CD(\bm{S}_n,\bm{Z})\le \frac{ (42d^{1/4}+16)}{\sqrt{n}} \E\Bigg[ \left( \sum_{i=1}^{d}\varphi_i(G_1)^2\right)^{3/2}\Bigg] \le \frac{d^{1/2}(42d^{1/4}+16)}{\sqrt{n}}\sum_{i=1}^d \E[|\varphi_i(G_1)|^3] ,
    \end{equation} where $\varphi_i(x)=H_{i}(x)/\sqrt{i!}$. Thus, it suffices to bound $\E[|\varphi_i(G_1)|^3]$ in terms of $i$. By~\cite[Prop.~2.2.1 \&~Cor.~2.8.14]{MR2962301}, it follows that $$\E[|H_i(G_1)|^3] \le 2^{3 i/2}\E[H_i(G_1)^2]^{3/2} = 2^{3i/2} (i!)^{3/2}, \quad \text{ for all }i\ge 1.$$
    Hence, $ \E[|\varphi_i(G_1)|^3]=
        \E[|H_{i}(G_1)|^3]/(i!)^{3/2} \le e^{i \cdot 3\log(2)/2}$, for $i=1 \ld d$. Using the closed form for partial geometric series, it follows, together with $d^{1/2}(42d^{1/4}+16) \le 58 d^{3/4}$ for all $d \ge 1$, that 
    \begin{equation}\label{eq:exact_upper_bound}
        d_\CD(\bm{S}_n,\bm{Z})
        \le 
        \frac{58 d^{3/4}}{\sqrt{n}} \sum_{i=1}^d e^{i(3\log(2)/2)} 
        \le 
        \frac{58 e^{3\log(2)/2}}{e^{3\log(2)/2}-1} \frac{d^{3/4} e^{d(3\log(2)/2)}}{\sqrt{n}}.\qedhere
    \end{equation} 
\end{proof}

\subsection{Proofs of Examples from Section~\ref{sec:satat_tests}}

\begin{proof}[Proof of Corollary~\ref{cor:HermiteTest}]
The main idea of the proof is to apply Theorem~\ref{thm:iid_MOM}, and thus find the rate of convergence for~\eqref{eq:stat_test_hermite}. As in Remark~\ref{rem:natural_d_C_conv}, it follows that $d_\mathcal{K}(R_n^{\mathrm{HT}},R) \le d_\CD(\bm{S}_n,\bm{Z})$, for all $n \in \N$, where $\bm{Z}=(Z_1\ldots Z_d)^\top \sim \mathcal{N}_{d}(\bm{0},\bm{I}_{d})$ such that $Z_1^2+\cdots +Z_{d}^2\eqd R \sim \chi^2(d)$. Next,~\eqref{eq:up_low_bound_iid_MOM-1} implies that  
\begin{equation*}
    d_\mathcal{K}(R_n^{\mathrm{HT}},R) %\le
    %\sup_{A \in B_{\bm{0}}(a)}|\p(\bm{S}_n \in A)-\p(\bm{Z} \in A)| 
    \le d_{\CD}(\bm{S}_n,\bm{Z}) 
    \le \frac{58 e^{3\log(2)/2}}{e^{3\log(2)/2}-1} d^{3/4}e^{d\cdot 3\log(2)/2}\, n^{-1/2}, \quad \text{ for all } n,d \in \N.\qedhere
\end{equation*}
\end{proof}

Before we continue to the proof of Corollary~\ref{cor:shenton_bowman}, we consider the following useful remark, which can be concluded from the proof of Theorem~\ref{thm:iid_MOM} above.

\begin{remark}\label{rem:alt_up_bound_d}
    Assume that $\bm{S}_n$ is as in~\eqref{eq:MOM_defn_S_n} and $\bm{Z}\sim \mathcal{N}_d(\bm{0},\bm{I}_d)$. It is possible to find an upper bound with exact constants and an explicit formula in $d$. However, this method is only tractable for smaller $d$, since it requires calculating the $4$th moment of the Hermite polynomials. Indeed, the bound~\eqref{eq:up_bound_w_HP} together with Jensen's inequality, yields a bound which numerically can be calculated for smaller $d$:
\begin{equation}\label{eq:exact_up_bound_small_d}
    d_{M}(\bm{S}_n,\bm{Z}) 
    \le \frac{d^{1/2}(42d^{1/4}+16)}{\sqrt{n}}\sum_{i=1}^d \frac{\E[|H_i(G_1)|^4]^{3/4} }{(i!)^{3/2}}\eqqcolon \frac{C_d}{\sqrt{n}}, \text{ for all }n,d \in \N.\qedhere
\end{equation} 
\end{remark}

\begin{proof}[Proof of Corollary~\ref{cor:shenton_bowman}]
Part~(a). Let $\wt{\bm{S}}_n$ be given by~\eqref{eq:Shent_Bow_test} with $R_n^{\mathrm{SB}}=\|\wt{\bm{S}}_n\|^2$. By Remark~\ref{rem:natural_d_C_conv}, it follows that $d_\mathcal{K}\big(R_n^{\mathrm{SB}},R\big) 
     = \sup_{A \in \mathcal{B}}\big|\p\big(\wt{\bm{S}}_n \in A\big)-\p\big(\wt{\bm{Z}} \in A\big)\big| \le d_\CD\big(\wt{\bm{S}}_n,\wt{\bm{Z}}\big)$, for all $n \in \N$, where $\wt{\bm{Z}}=(X,Y)^\top \sim \mathcal{N}_2(\bm{0},\bm{I}_2)$ such that $\|\wt{\bm{Z}}\|^2=X^2+Y^2\eqd R \sim \chi^2(2)$. We will now show that the quantity $d_\CD(\wt{\bm{S}}_n,\wt{\bm{Z}})$ can be bounded using the result from Theorem~\ref{thm:iid_MOM} by rewriting $\wt{\bm{S}}_n$ in terms of Hermite polynomials. Note for all $x \in \R$, that
\begin{equation}\label{eq:skew_kurt_intermsof_H_3H_4}
   \begin{pmatrix}
       x^3/\sqrt{15} \\ (x^4-3)/\sqrt{96}
   \end{pmatrix}
   = \begin{pmatrix}
       (H_3(x)+3H_1(x))/\sqrt{15} \\ (H_4(x)+6H_2(x))/\sqrt{96}
   \end{pmatrix} 
   = \begin{pmatrix}
       \sqrt{\frac{9}{15}} & 0 & \sqrt{\frac{6}{15}} & 0 \\ 0 & \frac{\sqrt{3}}{2} %6\sqrt{2/96} 
       & 0 & \frac{1}{2}%\sqrt{24/96}
   \end{pmatrix} \begin{pmatrix}
        H_1(x) \\ H_2(x)/\sqrt{2!} \\ H_3(x)/\sqrt{3!} \\ H_4(x)/\sqrt{4!}
   \end{pmatrix}.
\end{equation}
Define now the ensuing invertible matrix:
\begin{equation}
    \bm{\Delta}= \begin{pmatrix}
       \sqrt{\frac{9}{15}} & 0 & \sqrt{\frac{6}{15}} & 0 \\ 0 & \frac{\sqrt{3}}{2} %6\sqrt{2/96} 
       & 0 & \frac{1}{2}%\sqrt{24/96} 
       \\
       0 & 0 & 1 & 0 \\
       0 & 0 & 0 & 1
   \end{pmatrix}, \quad \text{ where } \bm{\Delta}\bm{\Delta}^\top =\begin{pmatrix}
       1 & 0 & \sqrt{\frac{6}{15}} & 0 \\ 0 & 1 & 0 & \frac{1}{2} \\ \sqrt{\frac{6}{15}} & 0 & 1 & 0 \\ 0 & \frac{1}{2} & 0 & 1
   \end{pmatrix}.
\end{equation} Let $\bm{S}_n$ be as in~\eqref{eq:MOM_defn_S_n} with $d=4$, then it follows by~\eqref{eq:skew_kurt_intermsof_H_3H_4} and the definition of $d_\CD$, that 
\begin{equation}\label{eq:bound_by_extension}
    d_\mathcal{K} \big(R_n^{\mathrm{SB}},R\big) \le d_\CD\big(\wt{\bm{S}}_n,\wt{\bm{Z}}\big) \le d_\CD(\bm{\Delta} \bm{S}_n,\bm{Z})=d_\CD(\bm{S}_n,\bm{\Delta}^{-1}\bm{Z}),
\end{equation} where $\bm{Z} \sim \mathcal{N}_4(\bm{0},\bm{\Sigma})$, with $ \bm{\Sigma}= \bm{\Delta}\bm{\Delta}^\top$. Hence, since $\bm{\Delta}^{-1}\bm{Z} \sim \mathcal{N}_4(\bm{0},\bm{I}_4)$, Theorem~\ref{thm:iid_MOM} together with Remark~\ref{rem:alt_up_bound_d} yields $d_\mathcal{K}\big(R_n^{\mathrm{SB}},R\big) \le d_\CD(\bm{S}_n,\bm{\Delta}^{-1}\bm{Z}) \le C_4n^{-1/2}$, for all $n \in \N$, where $C_4=4 (8 + 21 \sqrt{2}) (3^{3/4} + 15^{3/4} + 3 \cdot 71^{3/4} \sqrt{7}+ 93^{3/4}) \approx 923.44$ is calculated as in~\eqref{eq:exact_up_bound_small_d}.

Part~(b). Due to the definition of the Hermite polynomials, we note for all $x \in \R$, that
\begin{gather*}%\label{eq:defn_sigma_6}
       \begin{pmatrix}
       x^3/\sqrt{15} \\ (x^4-3)/\sqrt{96} \\ (x^5-10x^3)/\sqrt{345} \\ (x^6-15x^4+30)/\sqrt{4770}
   \end{pmatrix}
   = \begin{pmatrix}
       (H_3(x)+3H_1(x))/\sqrt{15} \\ (H_4(x)+6H_2(x))/\sqrt{96} \\ (H_5(x)-15H_1(x))/\sqrt{345} \\ (H_6(x)-45H_2(x))/\sqrt{4770}
   \end{pmatrix} 
   = \wt{\bm{\Delta}} \begin{pmatrix}
        H_1(x) \\ H_2(x)/\sqrt{2!} \\ H_3(x)/\sqrt{3!} \\ H_4(x)/\sqrt{4!} \\ H_5(x)/\sqrt{5!} \\ H_6(x)/\sqrt{6!}
   \end{pmatrix}, \text{ where }\\
    \wt{\bm{\Delta}}=\begin{pmatrix}
       \sqrt{\frac{9}{15}} & 0 & \sqrt{\frac{6}{15}} & 0 & 0 & 0 \\ 
       0 & \frac{\sqrt{3}}{2} %6/\sqrt{2/96} 
       & 0 & \frac{1}{2}%\sqrt{24/96} 
       & 0 & 0 \\
       -\frac{15}{\sqrt{345}} & 0 & 0 & 0 & 2 \sqrt{\frac{2}{23}} & 0 \\
       0 & -\frac{45\sqrt{2}}{\sqrt{4770}} & 0 & 0 & 0 & 2 \sqrt{\frac{2}{53}}
   \end{pmatrix}.
\end{gather*}
Recall that $\wt{\bm{Z}} \sim \mathcal{N}_4(\bm{0},\wt{\bm{\Sigma}})$, where $\wt{\bm{\Sigma}}=\wt{\bm{\Delta}}\wt{\bm{\Delta}}^\top $ with $\wt{\bm{\Sigma}}$ given in~\eqref{eq:HM4_cov}. Hence, as Remark~\ref{rem:natural_d_C_conv} and~\eqref{eq:bound_by_extension}, we can now deduce that 
\begin{equation*}
    d_\mathcal{K}\big(R_n^{\mathrm{HM4}},\|\wt{\bm{Z}}\|^2\big) \le 
    d_\CD\big(\wt{\bm{\Delta}}\bm{S}_n, \wt{\bm{Z}}\big)
    \le d_\CD(\bm{\Delta}\bm{S}_n,\bm{Z}) = d_\CD(\bm{S}_n,\bm{\Delta}^{-1}\bm{Z}), \quad \text{ for all }n \in \N,
\end{equation*} where $\bm{Z} \sim \mathcal{N}_6(\bm{0},\bm{\Sigma})$, with $\bm{\Sigma}= \bm{\Delta}\bm{\Delta}^\top $, and $\bm{\Delta}$ is the invertible matrix defined by
\begin{equation*}
    \bm{\Delta}= \begin{pmatrix}
       \sqrt{\frac{9}{15}} & 0 & \sqrt{\frac{6}{15}} & 0 & 0 & 0 \\ 
       0 & \frac{\sqrt{3}}{2} %6/\sqrt{2/96} 
       & 0 & \frac{1}{2}%\sqrt{24/96} 
       & 0 & 0 \\
       -\frac{15}{\sqrt{345}} & 0 & 0 & 0 & 2 \sqrt{\frac{2}{23}} & 0 \\
       0 & -\frac{45\sqrt{2}}{\sqrt{4770}} & 0 & 0 & 0 & 2 \sqrt{\frac{2}{53}} \\
       0 & 0 & 0 & 0 & 1 & 0 \\
       0 & 0 & 0 & 0 & 0 & 1
   \end{pmatrix}.
\end{equation*}
Thus, since $\bm{\Delta}^{-1}\bm{Z} \sim \mathcal{N}_6(\bm{0},\bm{I}_6)$, it follows by Theorem~\ref{thm:iid_MOM} and Remark~\ref{rem:alt_up_bound_d}, that $d_\mathcal{K}\big(R_n^{\mathrm{HM4}},\|\wt{\bm{Z}}\|\big) \le d_\CD(\bm{S}_n,\bm{\Delta}^{-1}\bm{Z}) \le C_6 n^{-1/2}$, for all $n \in \N$, where $C_6 = 2 \sqrt{6} (8 + 21 6^{1/4}) (3^{3/4} + 15^{3/4} + 3\cdot 71^{3/4}\sqrt{7} + 93^{3/4} + 3 \cdot 517^{3/4} \sqrt{3} + 35169^{3/4}) \approx 673794.4769$, calculated by~\eqref{eq:exact_up_bound_small_d}.
\end{proof}

\section*{Acknowledgements}

\thanks{
\noindent ABO and DKB are supported by AUFF NOVA grant AUFF-E-2022-9-39. DKB would like to thank the Isaac Newton Institute for Mathematical Sciences, Cambridge, for support and hospitality during the programme Stochastic systems for anomalous diffusion, where work on this paper was undertaken. This work was supported by EPSRC grant EP/Z000580/1.}

\printbibliography

\end{document}